\newtheorem{theorem}{Theorem}[section]
\newtheorem{lemma}[theorem]{Lemma}
\newtheorem{corollary}[theorem]{Corollary}
\newtheorem{proposition}[theorem]{Proposition}
\newtheorem{remark}[theorem]{Remark}
\newtheorem{definition}[theorem]{Definition}
\newcommand{\ncom}{\newcommand}
\ncom{\rar}{\rightarrow}
\ncom{\lrar}{\longrightarrow}
\ncom{\ov}{\overline}
\ncom{\m}{\mbox}
\ncom{\sta}{\stackrel}
\ncom{\comx}{{\mathbb C}}
\ncom{\Z}{{\mathbb Z}}
\ncom{\Q}{{\mathbb Q}}
\ncom{\R}{{\mathbb R}}
\ncom{\G}{{\mathbb G}}
\ncom{\al}{\alpha}
\ncom{\p}{{\mathbb P}}
\ncom{\E}{{\mathbb E}}
\ncom{\N}{{\mathbb N}}
\ncom{\K}{{\mathbb K}}
\ncom{\Le}{{\mathbb L}}
\ncom{\A}{{\mathbb A}}
\ncom{\F}{{\mathbb F}}
\ncom{\f}{\frac}
\ncom{\cA}{{\mathcal A}}
\ncom{\cX}{{\mathcal X}}
\ncom{\cO}{{\mathcal O}}
\ncom{\cW}{{\mathcal W}}
\ncom{\cL}{{\mathcal L}}
\ncom{\cP}{{\mathcal P}}
\ncom{\cH}{{\mathcal H}}
\ncom{\cS}{{\mathcal S}}
\ncom{\cM}{{\mathcal M}}
\ncom{\cC}{{\mathcal C}}
\ncom{\cT}{{\mathcal T}}
\ncom{\cF}{{\mathcal F}}
\ncom{\cN}{{\mathcal N}}
\ncom{\cJ}{{\mathcal J}}
\ncom{\cV}{{\mathcal V}}
\ncom{\cZ}{{\mathcal Z}}
\ncom{\cU}{{\mathcal U}}
\ncom{\cSU}{{\mathcal S \mathcal U}}
\ncom{\cG}{{\mathcal G}}
\ncom{\cQ}{{\mathcal Q}}
\ncom{\cR}{{\mathcal R}}
\ncom{\eop}{{\hfill $\Box$}}
\begin{document}
\baselineskip=16pt

\title[]{Absolute Chow--K\"unneth decomposition for rational homogeneous bundles and for log homogeneous varieties}

\author[J. N. Iyer]{Jaya NN Iyer}

\address{The Institute of Mathematical Sciences, CIT
Campus, Taramani, Chennai 600113, India}
\address{Department of Mathematics and Statistics, University of Hyderabad, Gachibowli, Central University P O, Hyderabad-500046, India}
\email{jniyer@imsc.res.in}

\footnotetext{Mathematics Classification Number: 14C25, 14D05, 14D20, 14D21 }
\footnotetext{Keywords: Homogeneous spaces, \'etale site, Chow groups.}
\maketitle

\section{Introduction}


Suppose $X$ is a nonsingular projective variety of dimension $n$ defined over 
the complex numbers. Let $CH^i(X)\otimes \Q$ be the Chow group of codimension 
$i$ algebraic cycles modulo rational equivalence, with rational coefficients. 
Jacob Murre \cite{Mu2}, \cite{Mu3} has made the following conjecture which leads to a filtration on the rational Chow groups:

\textbf{Conjecture}: The motive $h(X):=(X,\Delta_X)$ of $X$ has a Chow-K\"unneth decomposition:
$$
\Delta_X= \sum_{i=0}^{2n}\pi_i \in CH^n(X\times X)\otimes \Q
$$
such that $\pi_i$ are orthogonal projectors (see \S \ref{CK}).

In this paper, \textit{absolute} Chow--K\"unneth decomposition (resply. projectors) is the same as Chow--K\"unneth decomposition (resply.projectors). We write 'absolute' to emphasize the difference with 'relative' Chow--K\"unneth projectors which will appear in the paper.

Some examples where this conjecture is verified are:  curves, surfaces,
a product of a curve and surface \cite{Mu1}, \cite{Mu3}, abelian varieties and abelian schemes \cite{Sh},\cite{De-Mu}, 
uniruled threefolds \cite{dA-Mul},  elliptic modular varieties
\cite{Go-Mu}, \cite{GHM2}), universal families over Picard modular surfaces \cite{MM} and finite group quotients (maybe singular) of 
abelian varieties \cite{Ak-Jo}, some varieties with a nef tangent bundles \cite{Iy}, open moduli spaces of smooth curves \cite{Iy-MS}, universal families over some Shimura surfaces \cite{Miller}.

In \cite{Iy}, we had looked at varieties which have a nef tangent bundle. Using the structure theorems of Campana and Peternell \cite{Ca-Pe} and Demailly-Peternell-Schneider \cite{DPS}, we know that such a variety $X$ admits a finite \'etale surjective cover $X'\rar X$ such that $X'\rar A$ is a bundle of smooth Fano varieties over an abelian variety. Furthermore, any fibre which is a smooth Fano variety necessarily has a nef tangent bundle. It is an open question \cite[p.170]{Ca-Pe} whether such a Fano variety is a rational homogeneous variety. They answered this question positively in dimension at most $3$. 
We showed in \cite{Iy} that whenever the \'etale cover is a relative cellular variety over $A$ or if it admits a relative Chow--K\"unneth decomposition, then $X'$ and $X$ have a Chow--K\"unneth decomposition. In particular, it holds for varieties with a nef tangent bundle of dimension at most $3$.

In this paper, we weaken the hypothesis on the cover $X'\rar A$ as above and obtain a Chow--K\"unneth decomposition
whenever $X'\rar A$ is a rational homogeneous bundle, over an abelian variety $A$. This strengthens the results in \cite{Iy} and if the open question \cite[p.170]{Ca-Pe} is answered positively in higher dimensions then we obtain a Chow--K\"unneth decomposition for all varieties which have a nef tangent bundle.
This question is answered positively in some higher dimensional cases also, see \cite[section 4]{Hwang} and references therein. Hence we obtain a Chow--K\"unneth decomposition for new cases as well in higher dimensions.
   
We state the result and proofs, in a  more general situation.

\begin{theorem}\label{homogvar}
Suppose $S$ is a smooth projective variety over the complex numbers. Let $G$ be a connected reductive algebraic group and let $Z$ be a rational $G$ homogeneous space over the variety $S$. Assume that
$S$ has a Chow-K\"unneth decomposition. Then the following hold:

a) the motive of $Z$ has an absolute Chow--K\"unneth decomposition.

b) the motive of the bundle $Z\rar S$ is expressed
as a sum of tensor products of summands of the motive of $S$ with
the twisted Tate motive.
\end{theorem}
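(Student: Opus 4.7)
The plan is to reduce both (a) and (b) to a single relative motivic decomposition of the projection $\pi: Z \rar S$ as a sum of Tate-twisted copies of $h(S)$, and then to combine that decomposition with the hypothesis that $S$ has an absolute Chow--K\"unneth decomposition.

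First I would use the Bruhat decomposition of the typical fibre $G/P$ to equip $Z\rar S$ with the structure of a relative cellular variety. More precisely, after passing to an \'etale cover of $S$ over which the principal $G$-torsor that classifies $Z$ trivializes, the Schubert stratification of $G/P$ indexed by the set $W^P$ of minimal length coset representatives descends to a filtration
$$
\emptyset = Z_{-1} \subset Z_0 \subset Z_1 \subset \cdots \subset Z_N = Z
$$
by closed $S$-subschemes such that each stratum $Z_i \setminus Z_{i-1}\rar S$ is an affine bundle of rank equal to the dimension $\ell(w_i)$ of the corresponding Schubert cell. The relevant strata are cut out by $G$-equivariant incidence conditions on $G/P$, so they glue canonically over $S$ independently of the choice of trivializing cover.

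Next I would exploit this filtration to produce the desired motivic decomposition. The localization exact sequence for Chow motives, combined with homotopy invariance applied to each affine bundle $Z_i \setminus Z_{i-1}\rar S$, gives inductively in $i$ an isomorphism of Chow motives
$$
h(Z)\;\cong\;\bigoplus_{w\in W^P} h(S)(-\ell(w)).
$$
This is exactly the content of part (b) once $h(S)$ is further decomposed into the summands $h^j(S)$ coming from the assumed absolute Chow--K\"unneth decomposition of $S$. For part (a), combining the two decompositions gives
$$
h(Z)\;=\;\bigoplus_{w\in W^P}\,\bigoplus_{j=0}^{2\dim S} h^j(S)(-\ell(w)),
$$
and collecting summands by total cohomological weight yields the candidate absolute projectors $\pi_k(Z)\;=\;\sum_{j+2\ell(w)=k}\pi_j(S)(-\ell(w))$, whose sum is the diagonal $\Delta_Z$. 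Idempotence and pairwise orthogonality follow from the corresponding properties of the $\pi_j(S)$ together with the orthogonality of the relative Schubert correspondences coming from the cellular filtration.

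The main technical obstacle is to promote the fibrewise Bruhat decomposition to an honest stratification of $Z$ over $S$ whose strata are affine bundles, and to show that the resulting relative Schubert correspondences in $CH^*(Z\times_S Z)\otimes\Q$ are mutually orthogonal idempotents that lift to orthogonal absolute projectors in $CH^{\dim Z}(Z\times Z)\otimes \Q$. Concretely this requires checking that the classes of the relative Schubert cycles are invariant under the Galois action on the \'etale trivializing cover and so descend to global cycles on $Z\times_S Z$. Once this descent statement is in place the bookkeeping with the absolute Chow--K\"unneth projectors of $S$ is formal, and closely parallels the argument for relative cellular varieties carried out in \cite{Iy}.
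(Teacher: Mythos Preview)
Your outline correctly identifies the target decomposition and the final bookkeeping with the projectors $\pi_j(S)$, but it has a genuine gap at the point you yourself flag as ``the main technical obstacle.'' The assertion that the Schubert strata are ``cut out by $G$-equivariant incidence conditions on $G/P$'' and therefore glue canonically over $S$ is not correct: Schubert cells are $B$-orbit closures for a fixed Borel $B$, not $G$-invariant subvarieties, and the automorphism group of $G/P$ (through which the transition functions of the bundle factor) genuinely moves them around. So in general $Z\rar S$ is \emph{not} a relative cellular variety in the Zariski sense, only \'etale locally so, and your appeal to the argument of \cite{Iy} for relative cellular varieties does not apply directly. Nor is there any evident reason why the individual Schubert classes should be Galois-invariant on the trivializing cover, so the descent statement you need at the end is left unproved.

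The paper circumvents this issue rather than resolving it head-on. It never tries to descend the cells or their individual classes. Instead it observes (Lemma~\ref{Totaro}) that Bott vanishing $H^1(G/P,T)=0$ forces \'etale local triviality, and then introduces the subspace $RCH^r(Z_{U_\alpha})_\Q\subset CH^r(Z_{U_\alpha})_\Q$ spanned by the relative cells over each \'etale chart. The key step (Lemma~\ref{pushforwardiso}, Corollary~\ref{pushforwardiso2}) is that this subspace maps \emph{isomorphically} onto the corresponding subspace $RH^{2r}$ of singular cohomology, and the patching conditions on the \'etale site are checked on the cohomology side, where the relative K\"unneth components of $\Delta_{Z/S}$ are intrinsically defined and do glue. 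One then lifts these glued cohomological projectors back to $RCH^n(Z\times_S Z)_\Q$ via the isomorphism, obtaining a relative Chow--K\"unneth decomposition (Corollary~\ref{relCKstack}). Finally the criterion of Gordon--Hanamura--Murre \cite{GHM2} is invoked to pass from relative to absolute projectors (Proposition~\ref{kd}). So the missing ingredient in your proposal is precisely this Chow--to--cohomology comparison on the cell-generated piece, which replaces the unavailable global cellular filtration.
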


One of the main observation in the proof is to note that a rational homogeneous bundle as above  is \'etale locally a relative cellular variety, using the fact that the formal deformations of a rational homogeneous variety are trivial (see Lemma \ref{Totaro}). Hence we can construct relative Chow--K\"unneth projectors (in the sense of \cite{De-Mu})  over  \'etale morphisms of $S$. These projectors
 lie in the subspace generated by the relative algebraic cells. The corresponding relative cohomology classes patch up since they lie in the subspace generated by the relative analytic cells. Hence the relative orthogonal projectors can be patched up as algebraic cycles to obtain relative projectors, in the rational Chow groups of the associated regular stack.
In this case, we show that the relative Chow--K\"unneth projectors over the regular stack descend to  relative Chow--K\"unneth projectors for $Z\rar S$ (see Corollary \ref{relCKstack}). The criterion of Gordon-Hanamura-Murre \cite{GHM2}, for obtaining absolute Chow--K\"unneth projectors from relative Chow--K\"unneth projectors can be directly applied, see Proposition \ref{kd}.

A similar proof also holds for a class of log homogeneous varieties studied by M. Brion \cite{Brion}. A log homogeneous variety consists of a pair
$(X,D)$, where $X$ is a smooth projective variety and $D$ is a normal crossing divisor on $X$, with the following property. The variety $X$ is said to be log homogeneous with respect to $D$ if the associated logarithmic tangent bundle $\cT_X(-D)$ is generated by its global sections. It follows that $X$ is almost homogeneous under the connected automorphism group $G:= Aut^0(X,D)$, with boundary $D$. With notations as above, we show 

\begin{theorem}\label{loghomogvar}
Suppose $X$ is log homogeneous with respect to a normal crossing divisor $D$. Then $X$ has a Chow--K\"unneth decomposition. Moreover, the motive of $X$ is expressed as a sum of tensor products of the summands of the motive of its Albanese reduction, with the twisted Tate motive.
\end{theorem}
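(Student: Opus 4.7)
The plan is to reduce the log homogeneous case to the setting of Theorem \ref{homogvar} by exploiting Brion's structure theorem for log homogeneous varieties. Let $G=Aut^0(X,D)$ and let $\alpha:X\rar A$ be the Albanese morphism onto the Albanese reduction; by \cite{Brion}, $A$ is an abelian variety, $\alpha$ is a smooth surjective $G$-equivariant morphism, and its fibers $F$ are complete log homogeneous varieties under a semisimple quotient of $G$ whose Albanese variety is trivial. Such a fiber $F$ is spherical and toroidal, and admits a Bia\l ynicki--Birula cell decomposition coming from a generic one-parameter subgroup of a maximal torus acting on $F$. In particular, $\alpha:X\rar A$ is \'etale locally trivial over $A$ with fiber a projective variety carrying an algebraic cell decomposition, so it is \'etale locally a relative cellular variety over $A$.

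With this structure in hand, I would run the same machinery used for Theorem \ref{homogvar}(b). On each \'etale neighborhood $U\rar A$ trivializing the bundle, the relative cells provide relative Chow--K\"unneth projectors whose classes lie in the subspace generated by the relative algebraic cells. These local projectors patch together globally because the formal deformations of the fibers $F$ are trivial: this is the log analogue of Lemma \ref{Totaro}, which applies since complete log homogeneous varieties under a semisimple group are essentially wonderful-type compactifications of reductive homogeneous spaces and are rigid. The patched projectors live in the rational Chow group of the regular stack associated to $\alpha$, and, as in Corollary \ref{relCKstack}, descend to genuine relative Chow--K\"unneth projectors for the morphism $\alpha:X\rar A$.

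To obtain the absolute statement, I would invoke the fact that the abelian variety $A$ possesses an absolute Chow--K\"unneth decomposition by Deninger--Murre and Shermenev. The Gordon--Hanamura--Murre criterion (Proposition \ref{kd}) then promotes the relative projectors over $A$ to an absolute Chow--K\"unneth decomposition of $X$. Because the fibers have motive a sum of Tate twists, the construction simultaneously expresses $h(X)$ as a direct sum of tensor products of the summands of $h(A)$ with twisted Tate motives, yielding the second assertion of the theorem.

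The main obstacle is the rigidity input, namely the log analogue of Lemma \ref{Totaro}: one needs that formal deformations of the fibers $F$ of the Albanese map are trivial so that the \'etale-local relative projectors can be glued. For rational homogeneous $G/P$ this is classical, but in the log homogeneous setting it must be extracted from Brion's description of complete log homogeneous varieties under a semisimple group as suitable toroidal (wonderful-type) compactifications, together with the vanishing of $H^1(F,\cT_F(-\partial F))$ for the associated log tangent bundle. Once this rigidity is in place, the rest of the argument is parallel to the proof of Theorem \ref{homogvar} and requires only book-keeping about the cellular decomposition of the fibers and the stack-theoretic descent.
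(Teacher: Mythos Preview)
Your overall strategy matches the paper's: invoke Brion's structure theorem for the Albanese fibration, establish \'etale local triviality with spherical (hence cellular) fiber, and then run the descent and Gordon--Hanamura--Murre machinery exactly as in Theorem~\ref{homogvar}. The point where you diverge is the \'etale local triviality step, which you flag as ``the main obstacle'' and propose to deduce from a log-rigidity statement for the fiber. The paper's route here is much simpler: Brion's theorem already presents $X$ as an associated bundle $G\times^I Y \to G/I = A$, and \'etale local triviality of $X\to A$ follows immediately from the standard fact that the quotient morphism $G \to G/I$ of an algebraic group by a smooth closed subgroup is \'etale locally trivial (cf.\ \cite[6.14]{Borel}). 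No deformation-theoretic input is needed in general; the paper mentions the Bien--Brion vanishing $H^1(Y,T_Y)=0$ only in the special Fano case before giving this direct argument. Note also that rigidity, where it enters, is used solely to produce \'etale local triviality (that is the content of Lemma~\ref{Totaro}); the gluing of the local projectors is handled separately by the cohomology comparison of Lemma~\ref{pushforwardiso} and Corollary~\ref{pushforwardiso2}, so your sentence tying the patching itself to rigidity of the fibers conflates two distinct steps.
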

See Theorem \ref{logCK}.

The proof uses the classification of log homogeneous varieties by Brion \cite{Brion}. The fibres of the Albanese morphism are smooth spherical varieties. In this case we check that \'etale local triviality of the Albanese fibration holds. The proof of Theorem \ref{loghomogvar} relies on the algebraicity of the cohomology of the spherical varieties, similar to Theorem \ref{homogvar}, and applying the criterion of \cite{GHM2}.

{
\Small
Acknowledgements:  We thank B. Totaro for pointing out some errors in the previous version and for  helpful suggestions. Thanks are also due to J-M.Hwang for informing us about the status of Campana-Peternell conjecture in higher dimensions.
}
\section{Preliminaries}
We work over the field of complex numbers in this paper. We begin by recalling the standard constructions of the category of motives.
Since this is fairly discussed in the literature, we give a brief account and refer to \cite{Mu2}, \cite{Sc} for details. 

\subsection{Category of motives}
The category of nonsingular projective varieties over $\comx$ will be denoted by $\cV$.
For an object $X$ of $\cV$, let $CH^i(X)_\Q=CH^i(X)\otimes \Q$ denote the rational Chow group of codimension $i$ algebraic cycles modulo rational equivalence.
Suppose $X,Y\in Ob(\cV)$ and $X=\cup X_i$ be a decomposition into connected components $X_i$ and $d_i=\m{dim }X_i$.
Then $\m{Corr}^r(X,Y)= \oplus_i CH^{d_i+r}(X_i\times Y)_\Q$ is the group of correspondences of degree $r$ from $X$ to
$Y$. 
\vskip .3cm 
 We will use the standard framework of the category of Chow motives $\cM_{rat}$ in this paper and refer to \cite{Mu2} for details.
We denote the category of motives $\cM_{\sim}$, where $\sim$ is any equivalence, for instance $\sim$ is homological or numerical equivalence.
When $S$ is a smooth variety, we also consider the category of relative 
Chow motives $CH\cM(S)$ which was introduced in \cite{De-Mu} and \cite{GHM}. When $S=\m{ Spec } \comx$ then the category $CH\cM(S)= \cM_{rat}$. 

\subsection{Chow--K\"unneth decomposition for a variety}\label{CK}

Suppose $X$ is a nonsingular projective variety over $\comx$ of dimension $n$.
Let $\Delta_X\subset X\times X$ be the diagonal.
Consider the K\"unneth decomposition of $\Delta$ in the Betti Cohomology:
$$
\Delta_X = \oplus_{i=0}^{2n}\pi_i^{hom}
$$
where $\pi_i^{hom}\in H^{2n-i}(X)\otimes H^i(X)$.

\begin{definition}
The motive of $X$ is said to have K\"unneth decomposition if each of the
classes $\pi_i^{hom}$ are algebraic and are projectors, i.e., $\pi_i^{hom}$ is the image of an algebraic cycle $\pi_i$
under the cycle class map from the rational Chow groups to the Betti Cohomology and satisfying $\pi_i\circ \pi_i=\pi_i$ and  $\Delta_X = \oplus_{i=0}^{2n}\pi_i$ in the rational Chow ring of $X\times X$.  
The algebraic projectors $\pi_i$ are called as the algebraic K\"unneth projectors.
\end{definition}

\begin{definition}
The motive of $X$ is furthermore said to have a Chow--K\"unneth decomposition if the algebraic K\"unneth projectors are orthogonal projectors, i.e.,
$\pi_i\circ \pi_j=\delta_{i,j}\pi_i$ and $\Delta_X = \oplus_{i=0}^{2n}\pi_i$ in the rational Chow ring of $X\times X$.
\end{definition}

\section{Rational homogeneous bundles over a variety}

In this section, we firstly recall the motive of a rational homogeneous variety and later construct relative Chow--K\"unneth projectors for a bundle of homogeneous varieties. The criterion of \cite{GHM2} can then be applied to obtain absolute Chow--K\"unneth projectors on the total space of the bundle. For this purpose, we need to show that the bundle is \'etale locally trivial and check patching conditions over the etale coverings. We begin by recalling the motive of a rational homogeneous variety.

\subsection{The motive of a rational homogeneous space}

Suppose $F$ is a rational homogeneous variety. Then $F$ is identified as a quotient $G/P$, for some reductive linear algebraic group $G$ and $P$ is a
parabolic subgroup of $G$. 
Notice that $F$ is a cellular variety, i.e., it has a cellular decomposition
$$
\emptyset=F_{-1}\subset F_0 \subset...\subset F_n=F
$$
such that each $F_i\subset F$ is a closed subvariety and $F_i-F_{i-1}$ is an 
affine space.

Then we have
\begin{lemma}\cite[Theorem, p.363]{Ko}
\label{le.-motF}
The Chow motive $h(F)=(F,\Delta_F)$ of $F$ decomposes as a direct sum of twisted Tate motives
$$
h(F)=\bigoplus_{\omega}\Le^{\otimes \m{dim }\omega}.
$$
Here $\omega$ runs over the set of cells of $F$.
\end{lemma}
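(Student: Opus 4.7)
The plan is to exploit the cellular stratification directly and build the K\"unneth projectors from the closures of the cells. The crucial input needed is that for a cellular variety the rational Chow groups are free abelian, spanned by the closures $X_\omega := \ov{\omega}$, and that since $F\times F$ is again cellular one has the K\"unneth-type identification $CH^*(F\times F)_\Q \cong CH^*(F)_\Q \otimes CH^*(F)_\Q$. Both of these follow from the existence of the cellular filtration together with a standard excision/long-exact-sequence argument applied inductively to the strata.

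First I would produce a Poincar\'e-dual basis of $CH^*(F)_\Q$. The intersection pairing corresponds via the cycle class map to the Poincar\'e pairing on $H^*(F,\Q)$, and because $H^*(F)$ is purely algebraic and concentrated in even degrees, the cycle class map is an isomorphism; hence the intersection pairing is non-degenerate. This yields a dual basis $\{Y_\omega\}$, with $Y_\omega$ of codimension $\dim\omega$, characterized by $\deg(X_\omega \cdot Y_{\omega'}) = \delta_{\omega,\omega'}$.

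Next I would set $\pi_\omega := [Y_\omega \times X_\omega] \in CH^n(F\times F)_\Q$. Using the K\"unneth isomorphism together with the duality relation, a direct bookkeeping computation shows $\pi_\omega \circ \pi_{\omega'} = \delta_{\omega,\omega'}\pi_\omega$. To prove $\sum_\omega \pi_\omega = \Delta_F$, I would expand $\Delta_F = \sum c_{\omega,\omega'}\, Y_\omega \times X_{\omega'}$ under the K\"unneth decomposition and pair both sides with $X_\omega \times Y_{\omega'}$; the duality identity forces $c_{\omega,\omega'} = \delta_{\omega,\omega'}$. To identify $(F,\pi_\omega)$ with $\Le^{\otimes \dim\omega}$, I would exhibit mutually inverse morphisms of motives as the correspondences given by $[Y_\omega] \in CH^{\dim\omega}(\m{Spec}\,\comx \times F)$ and $[X_\omega] \in CH^{\dim\omega}(F \times \m{Spec}\,\comx)$ with the appropriate Tate twist; the two compositions evaluate to $\pi_\omega$ and to the identity on $\Le^{\otimes \dim\omega}$ respectively, again purely from the pairing identity.

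The main obstacle is establishing the motivic K\"unneth identification $CH^*(F\times F)_\Q \cong CH^*(F)_\Q \otimes CH^*(F)_\Q$, since every subsequent step is formal once this is granted. This is where the cellular hypothesis is genuinely used: $F\times F$ inherits a stratification whose strata are products of the original cells (themselves affine bundles), and one must argue inductively, via the localization sequence and the fact that Chow groups of affine spaces are trivial outside the top, that the product cell closures freely generate $CH^*(F\times F)_\Q$.
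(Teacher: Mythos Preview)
Your argument is correct and is essentially the standard route to this result; the paper, however, does not give its own proof of this lemma at all---it simply attributes the statement to K\"ock \cite[Theorem, p.~363]{Ko} and moves on. So there is nothing in the paper to compare your proof against beyond the citation.

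For what it is worth, your outline matches the strategy in the cited reference: one uses the cellular filtration to show that the cell closures freely generate $CH^*(F)_\Q$, deduces the K\"unneth isomorphism for $CH^*(F\times F)_\Q$ because $F\times F$ is again cellular, and then builds the projectors $\pi_\omega = Y_\omega \times X_\omega$ from a Poincar\'e-dual pair of bases. The identification of each summand $(F,\pi_\omega)$ with $\Le^{\otimes \dim\omega}$ via the correspondences $[Y_\omega]$ and $[X_\omega]$ is exactly how one finishes. Your identification of the K\"unneth isomorphism as the only substantive step is accurate; once that is in hand, everything else is linear algebra with correspondences.
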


In particular, this says that the Chow--K\"unneth decomposition holds for $F$. Next we consider bundles of homogeneous spaces $Z\rar S$ over a smooth variety $S$. We want to describe the Chow motive of $Z$ in terms of the Chow motive of $S$, up to some Tate twists. 
For this , we need to show \'etale local triviality of $Z\rar S$, and we discuss it in the next subsection.

\subsection{The \'etale local triviality of a rational homogeneous bundle}\label{etaleloc}

Suppose $Z\rar S$ is a smooth projective morphism and the base variety $S$ is smooth and projective.

By \'etale local triviality, we mean that there exist \'etale morphisms $p_\alpha:U_\alpha\rar S$ such that the pullback bundle
$$
Z_{U_\alpha}:=Z\times_S{U_\alpha}\rar U_\alpha
$$
is a Zariski trivial fibration and the images of $p_\alpha$ cover $S$, i.e., $\cup_\alpha p_\alpha(U_\alpha)=S$. Here $\alpha$ runs over some indexing set $I$.
Consider a rational homogeneous bundle $f:Z\lrar S$,
i.e., $\pi$ is a smooth projective morphism and any fibre $\pi^{-1}y$ is a rational homogeneous variety $G/P$. Here $G$ is a reductive linear algebraic group  and $P\subset G$
is a parabolic subgroup. Assume that $S$ is a smooth complex projective variety.

In the following discussion, we note that an \'etale cover \{$U_\alpha$\} as above exists for a rational homogeneous bundle $Z\rar S$.

\begin{lemma}\label{Totaro} There are \'etale open  sets $p_\alpha:U_\alpha \rar S$ (satisfying $\cup_\alpha p_\alpha(U_\alpha)=S$), such that
the pullback bundle $Z_{U_\alpha}\rar U_\alpha$ is a Zariski trivial fibration.
\end{lemma}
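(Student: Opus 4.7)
The plan is to combine the formal rigidity of rational homogeneous varieties with M.~Artin's approximation theorem to upgrade formal triviality along each fibre of $f:Z\rar S$ to an étale-local Zariski triviality.

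Fix a closed point $s\in S$ and write $F=f^{-1}(s)\cong G/P$ for the fibre. The essential input is that $F$ is infinitesimally rigid: since $G/P$ depends only on the semisimple part of $G$, Bott's theorem (Borel--Weil--Bott) gives $H^1(F,\cT_F)=0$, and in fact every formal deformation of $F$ over a complete local Artin ring is trivial -- this is the fact, cited just before the lemma, that the formal deformations of a rational homogeneous variety are trivial. Applied to the formal completion of the smooth projective morphism $f$ along $F$, this yields a formal isomorphism
$$
Z\times_S \m{Spec}\,\widehat{\cO}_{S,s}\;\cong\; F\times \m{Spec}\,\widehat{\cO}_{S,s}
$$
of formal schemes over $\widehat{\cO}_{S,s}$.

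Next consider the $S$-functor $\cJ:=\underline{\m{Isom}}_S(F\times S,\,Z)$, which by Grothendieck's representability theorem for Isom functors between flat projective schemes is representable by an $S$-algebraic space locally of finite presentation. The displayed formal isomorphism is precisely a formal section of $\cJ\rar S$ through $s$. By Artin's approximation theorem, applied to $\cJ$ over the excellent local ring $\cO_{S,s}$, any such formal section is approximated by a genuine section defined over some étale neighbourhood $p_s:U_s\rar S$ of $s$. Such a section is an isomorphism $F\times U_s\cong Z\times_S U_s$ of $U_s$-schemes, so $Z_{U_s}\rar U_s$ is a Zariski-trivial fibration.

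Letting $s$ range over the (closed) points of $S$ gives the required étale cover $\{p_\alpha:U_\alpha\rar S\}$; by quasi-compactness of $S$ one can extract a finite subcover. The substantive input is the classical formal rigidity of $G/P$, which the author invokes explicitly; the only thing to verify beyond that is representability of $\cJ$, after which Artin approximation delivers the étale-local trivialization essentially for free. I expect the main subtlety to be a careful statement of what "rational homogeneous bundle" means for the fibres (i.e.\ that all fibres are isomorphic to a fixed $G/P$), which is automatic once one fibre is identified as $G/P$ and the infinitesimal rigidity of $G/P$ is invoked in a connected family.
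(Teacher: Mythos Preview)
Your proof is correct and follows essentially the same approach as the paper: both invoke Bott's vanishing $H^1(G/P,\cT_{G/P})=0$ to obtain formal rigidity of the fibre, and then upgrade this to \'etale-local triviality. The only difference is packaging---the paper cites \cite[Proposition~2.6.10]{Sernesi} for the passage from formal rigidity to \'etale-local triviality, whereas you unpack that step explicitly via the Isom functor and Artin approximation.
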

\begin{proof}
We need to note that the formal deformations of a rational homogeneous variety are trivial.
This is just a consequence of the well-known Bott's vanishing theorem:
$H^1(G/P, T) =0$. The assertion on \'etale local triviality follows from \cite[Proposition 2.6.10]{Sernesi}.

\end{proof}

Our aim is to obtain relative Chow--K\"unneth projectors for the bundle $Z/S$. For this purpose, we first construct relative projectors over the \'etale coverings of $Z\rar S$ and check the patching conditions. This requires us to use the language of stacks which enables us to descend the projectors down to $Z\rar S$. Hence in the following subsection, we recall some facts on regular stacks and the relationship of the rational Chow groups/cohomology of stacks with that of its coarse moduli space.
These facts will be essentially applied to the simplest situation---the rational homogeneous bundle $Z\rar S$. Also, the patching will be used for
\'etale open sets of $Z$ which are of the type $Z_{U_\alpha}:=Z\times_S U_\alpha$, for \'etale morphisms $U_\alpha\rar S$.
In this context, it is possible to avoid stacks, since the regular stack  associated to the \'etale coverings is again $Z$.
But we use the stacks, essentially to say that the algebraic cells which live in the fibres of $Z\rar S$ patch together over the \'etale coverings. This will be needed in the proof of Lemma \ref{pushforwardiso}.

We remark that more general patching statements might also hold for other varieties, using stacks. However we do not know concrete examples as yet, where it can be checked.

\subsection{Chow groups of an \'etale site}

Mumford, Gillet (\cite{Mumford},\cite{Gillet}) have defined Chow groups for Deligne--Mumford stacks and more generally for any algebraic stack $\cX$. Furthermore, intersection products are defined whenever $\cX$ is a regular stack . 
Let $\cX$ be a regular stack. The coarse moduli space of $\cX$ is denoted by $X$ and $p:\cX\rar X$ be the projection.
So from \cite[Theorem 6.8]{Gillet}, the pullback $p^*$ and pushforward maps $p_*$ establish a 
ring isomorphism of rational Chow groups

\begin{equation}\label{ringiso}
CH^{\ast}(\cX)_{\Q }\cong CH^{\ast}(X)_{\Q }.
\end{equation}

This can be applied to the product $p\times p:\cX\times \cX\rar X\times X$, to get a ring isomorphism
\begin{equation}\label{ringiso2}
CH^{\ast}(\cX\times \cX)_{\Q }\cong CH^{\ast}(X\times X)_{\Q }.
\end{equation}
Assume that $X$ is a smooth projective variety.
Then these isomorphisms also hold in the rational singular cohomology of $\cX$ and $\cX\times \cX$ (for example, see \cite{Behrend}):
\begin{equation}\label{behrend1}
H^{\ast}(\cX,\Q )\cong H^{\ast}(X,\Q ).
\end{equation}
and
\begin{equation}\label{behrend2}
H^{\ast}(\cX\times \cX,\Q )\cong H^{\ast}(X\times X,\Q ).
\end{equation}

Via these isomorphisms, we can pullback the K\"unneth decomposition of the diagonal class in $H^{2n}(X\times X,\Q)$ to a decomposition 
of the diagonal class of $\cX$ in $H^{2n}(\cX\times \cX,\Q)$, and whose components we refer to as the K\"unneth components of $\cX$.

Given a smooth variety $X$, consider an atlas $\sqcup_{\alpha \in I} U_\alpha$ of $X$ such that $p_\alpha:U_\alpha\rar X$ is an \'etale morphism, for each $\alpha \in I$, and the images of $p_\alpha$ cover $X$. Then one can associate a $Q$-variety \cite{Mumford} to this atlas. Furthermore,  by \cite[Proposition 9.2]{Gillet}, there is a regular stack
$\cX$ associated to this data such that $X$ is its coarse moduli space, i.e., there is a projection
$$
p:\cX\rar X.
$$ 
In this case, we note that the regular stack $\cX$ is the same as the variety $X$. 
Hence the isomorphisms in \eqref{ringiso}, \eqref{ringiso2}, \eqref{behrend1} and \eqref{behrend2} trivially hold for the projection $p$. 
More precisely, we have
$$
CH^*(\cX)_\Q= CH^*(X)
$$
and
$$
H^*(\cX,\Q)=CH^*(X,\Q).
$$

\subsection{The motive of a rational homogeneous bundle}

Suppose $Z\rar S$ is a rational homogeneous bundle over a smooth projective variety $S$. Let $S^{et}$ be the \'etale site on $S$, together with the natural morphism of the sites $f:S^{et}\rar S$. Here $S$ is considered with the Zariski site. Consider the pullback bundle
$$
Z^{et}:=Z\times_S S^{et}\rar S^{et}
$$
over $S^{et}$.

Since we are dealing with a rational homogeneous bundle, we can describe these covers explicitly as follows;
by Lemma \ref{Totaro}, the pullback bundles $Z_{U_\alpha}\rar U_\alpha$, for $\alpha\in I$, are Zariski  trivial. In other words,  
$Z_{U_\alpha}=F\times U_\alpha$, where $F$ is a typical fiber of $Z\rar S$.
Hence $Z_{U_\alpha} \rar U_\alpha $ is a relative cellular variety for each $\alpha\in I$. 

The description of the rational Chow groups
of relative cellular spaces $\pi:X\rar T$ is given by B. Koeck \cite{Ko} (see also \cite[Theorem 5.9]{Ne-Za}), which is stated for the higher Chow groups:

Suppose $X\rar T$ is a relative cellular space.

Then there is a sequence of closed embeddings
\begin{equation}\label{cellulardecomposition}
\emptyset=Z_{-1}\subset Z_{0} \subset...\subset Z_n=X
\end{equation}
such that $\pi_k:Z_k\lrar T$ is a flat projective $T$-scheme. Furthermore, for any $k=0,1,...,n$, the open
complement $Z_k-Z_{k-1}$ is $T$-isomorphic to an affine space $\A^{m_k}_{T}$ of relative dimension $m_k$.
Denote $i_k:Z_k\hookrightarrow X$.

\begin{theorem}\label{Koeck}
For any $a,b\in \Z$, the map
\begin{eqnarray*}
\bigoplus_{k=0}^nH_{a-2m_k}(T,b-m_k) & \lrar & H_a(X,b) \\
(\al_0,...,\al_n) & \mapsto & \sum_{k=0}^n(i_k)_*\pi_k^*\al_k
\end{eqnarray*}
is an isomorphism.
Here $H_a(T,b)=CH_b(T,a-2b)$ are the higher Chow groups of $T$.
\end{theorem}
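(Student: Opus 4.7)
The plan is to prove Theorem~\ref{Koeck} by induction on the length $n$ of the cellular filtration, using the localization long exact sequence and the homotopy invariance property of Bloch's higher Chow groups.

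For the base case $n=0$, we have $X = Z_0 = \mathbb{A}^{m_0}_T$, and the claim is precisely the homotopy invariance statement $H_a(\mathbb{A}^{m_0}_T, b) \cong H_{a-2m_0}(T,b-m_0)$ implemented via the flat pullback $\pi_0^*$; this is standard for higher Chow groups. For the inductive step, I assume the theorem holds for the relative cellular scheme $Z_{n-1} \to T$ with filtration $Z_{-1} \subset \cdots \subset Z_{n-1}$, and I consider the closed embedding $i' : Z_{n-1} \hookrightarrow X$ with open complement $j: U := X - Z_{n-1} \hookrightarrow X$, where by hypothesis $U \cong_T \mathbb{A}^{m_n}_T$.

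The localization exact sequence for higher Chow groups reads
\begin{equation*}
\cdots \rar H_a(Z_{n-1}, b) \sta{(i')_*}{\lrar} H_a(X, b) \sta{j^*}{\lrar} H_a(U, b) \lrar H_{a-1}(Z_{n-1}, b) \rar \cdots
\end{equation*}
The term on the right is isomorphic, via flat pullback along $U \cong \mathbb{A}^{m_n}_T \rar T$, to $H_{a-2m_n}(T, b-m_n)$. The key point is to split this sequence: I would use the map $\pi_n^* : H_{a-2m_n}(T,b-m_n) \rar H_a(X,b)$, where $\pi_n = \pi : Z_n = X \rar T$, and observe that the composition $j^* \circ \pi_n^*$ equals the flat pullback $(\pi|_U)^*$, which by homotopy invariance is an isomorphism onto $H_a(U,b) \cong H_{a-2m_n}(T,b-m_n)$. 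This provides a splitting of the localization sequence, giving
\begin{equation*}
H_a(X,b) \; \cong \; H_a(Z_{n-1},b) \, \oplus \, H_{a-2m_n}(T,b-m_n),
\end{equation*}
where the first summand is embedded via $(i')_*$ and the second via $\pi_n^*$. Invoking the inductive hypothesis on $Z_{n-1}$ and noting that for $k < n$ the map $(i_k)_* = (i')_* \circ (i_k')_*$ (with $i'_k: Z_k \hookrightarrow Z_{n-1}$) gives the desired decomposition $H_a(X,b) \cong \bigoplus_{k=0}^n H_{a-2m_k}(T, b-m_k)$, with the isomorphism realized by $(\al_0,\ldots,\al_n) \mapsto \sum_k (i_k)_* \pi_k^* \al_k$.

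The main obstacle is the splitting step: one must verify that $j^* \circ \pi_n^*$ really coincides with the homotopy-invariance isomorphism, which amounts to compatibility of flat pullback with open restriction; this is a basic functoriality property of higher Chow groups and is routine once set up correctly. A secondary subtlety is ensuring the localization sequence is available at the level of generality required (arbitrary base $T$, higher Chow groups with rational coefficients), but this is provided by Bloch's localization theorem and its refinements. Once these ingredients are in place, the inductive bookkeeping that $(i_k)_*\pi_k^*$ lands correctly in each summand is straightforward.
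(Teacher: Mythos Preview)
Your argument is correct and is precisely the standard inductive proof via the localization long exact sequence and homotopy invariance of higher Chow groups. Note that the paper itself does not give an argument here: its proof consists solely of the reference ``See \cite[Theorem, p.371]{Ko}'', and what you have written is essentially a reconstruction of K\"ock's original proof (the same scheme also appears in \cite{Ne-Za}). One minor remark: in the base case $n=0$, the hypothesis that $Z_0\to T$ is projective forces $m_0=0$, so homotopy invariance is trivially the identity there; this does not affect your induction.
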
 
\begin{proof}
See \cite[Theorem, p.371]{Ko}.
\end{proof}
The above theorem can equivalently be restated to express the rational Chow groups of $X$ as 
\begin{equation}\label{koeck}
CH^r(X)_\Q= \bigoplus_{k=0}^r (\oplus_\gamma\Q[\omega_k^\gamma]).f^*CH^k(T)_\Q,
\end{equation}
Here $\omega_k^\gamma$ are the $r-k$ codimensional relative cells and $\gamma$ runs over the indexing set of $r-k$ codimensional relative cells in the $T$-scheme $X$.

We now apply this theorem to our situation: we have a homogeneous bundle $Z\rar S$ and an \'etale atlas $S^{et}:=\sqcup_\alpha U_\alpha \rar S$, such that $Z_{U_\alpha}\rar U_\alpha$ is trivial.

\begin{lemma}\label{relativecell}
Given a Zariski  trivial homogeneous bundle $p_\alpha:Z_{U_\alpha}\rar U_\alpha$, the rational Chow groups are described as follows:
$$
CH^r(Z_{U_\alpha})_\Q = \bigoplus_{k=0}^{r} (\oplus_\gamma\Q[\omega_k^\gamma]).p^*_\alpha CH^k(U_\alpha)_\Q.
$$
\end{lemma}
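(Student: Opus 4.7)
The plan is a direct reduction to Theorem \ref{Koeck}, or more precisely its reformulation in \eqref{koeck}. The whole content of the lemma is that the Zariski triviality hypothesis upgrades the Bruhat-type cellular decomposition of the fibre $F = G/P$ to a \emph{relative} cellular decomposition over $U_\alpha$, after which Koeck's theorem applies verbatim.

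First I would use Zariski triviality to fix an identification $Z_{U_\alpha}\cong F\times U_\alpha$ over $U_\alpha$, with $p_\alpha$ the second projection. By the Bruhat decomposition recalled just before Lemma \ref{le.-motF}, the homogeneous variety $F$ carries a filtration
\[
\emptyset = F_{-1}\subset F_0 \subset \cdots \subset F_n = F
\]
by closed subvarieties such that $F_k - F_{k-1}\cong \A^{m_k}$. Taking products with $U_\alpha$ produces a filtration
\[
\emptyset = Z_{-1}\subset Z_0 \subset \cdots \subset Z_n = Z_{U_\alpha}, \qquad Z_k := F_k\times U_\alpha,
\]
where each $Z_k \rar U_\alpha$ is flat and projective (being the pullback of $F_k \rar \m{Spec }\comx$) and $Z_k - Z_{k-1}\cong \A^{m_k}_{U_\alpha}$ is the relative affine space of dimension $m_k$ over $U_\alpha$. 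This is exactly the data \eqref{cellulardecomposition} exhibiting $Z_{U_\alpha}\rar U_\alpha$ as a relative cellular space.

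Second, I would invoke Theorem \ref{Koeck}, in the codimension-indexed form \eqref{koeck}, applied to this relative cellular structure. This immediately yields
\[
CH^r(Z_{U_\alpha})_\Q \;=\; \bigoplus_{k=0}^{r}\Bigl(\bigoplus_\gamma \Q[\omega_k^\gamma]\Bigr)\cdot p_\alpha^{*}CH^k(U_\alpha)_\Q,
\]
where the classes $[\omega_k^\gamma]$ are the closures in $Z_{U_\alpha}$ of the relative cells of codimension $r-k$, i.e.\ the products $\ov{(F_i - F_{i-1})}\times U_\alpha$ for those $i$ with $\m{dim }F - m_i = r-k$.

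There is no real obstacle here: the lemma is essentially the observation that trivialization of the bundle transfers the fibrewise cellular decomposition of $G/P$ to a relative cellular decomposition over $U_\alpha$, after which Koeck's theorem does all the work. The only care points are the bookkeeping of indices (codimension $r-k$ relative cells paired with codimension-$k$ pullbacks from $U_\alpha$) and the flatness and projectivity of each $Z_k\rar U_\alpha$, both of which are automatic from the product structure.
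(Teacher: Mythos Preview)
Your argument is correct and follows essentially the same route as the paper: the Zariski triviality $Z_{U_\alpha}\cong F\times U_\alpha$ turns the Bruhat filtration of $F$ into a relative cellular structure over $U_\alpha$, and then Koeck's Theorem \ref{Koeck} (in the form \eqref{koeck}) gives the stated decomposition. The paper additionally remarks that the same formula can be read off from \cite[Theorem 2]{Fulton} applied to the product $F\times U_\alpha$, but this is offered as an equivalent reformulation rather than a different proof.
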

\begin{proof} 
Since the homogeneous bundle $p_\alpha:Z_{U_\alpha}\rar U_\alpha$ is a Zariski trivial bundle, it is a relative cellular variety. Hence the above Theorem \ref{Koeck} can be applied and it gives a natural isomorphism
$$
CH^r(Z_{U_\alpha})_\Q = \bigoplus_{k=0}^{r} (\oplus_\gamma\Q[\omega_k^\gamma]).f^*_\alpha CH^k(U_\alpha)_\Q.
$$
Equivalently, since $Z_{U_\alpha}=F\times U_\alpha$, we have
the equality (see \cite[Theorem 2]{Fulton}):
\begin{equation}\label{relchow}
CH^r(Z_{U_\alpha})_\Q=CH^r(F\times U_\alpha)_\Q=\sum_{p,q,p+q=r}CH^p(F)_\Q.CH^q(U_\alpha)_\Q.
\end{equation}
Here $F$ is a typical fibre of $Z\rar S$ which is cellular variety.
This gives the assertion.
\end{proof}
For our applications, it suffices to consider the piece $k=0$, which consists of only the relative algebraic cells of codimension $r$, namely,
$$
RCH^r(Z_{U_\alpha})_\Q := \oplus_\gamma\Q[\omega_0^\gamma].
$$
In other words, we only look at the subgroup consisting of the direct summand 
$$
CH^r(F)\subset CH^r(F\times U_\alpha),
$$
in \eqref{relchow}.

A similar equality as in \eqref{relchow}, holds in the rational singular cohomology of $Z_{U_{\alpha}}\rar U_\alpha$. 
So we can also define the piece
$$
RH^{2r}(Z_{U_\alpha})_\Q := \oplus_\gamma\Q[\omega_0^\gamma]
$$
in the rational singular cohomology of $Z_{U_\alpha}$ and the piece
$$
RH^{2r}(Z)_\Q := \oplus_\gamma\Q[\omega_0^\gamma]
$$
as a subspace of the rational Betti cohomology $H^{2r}(Z,\Q)$, generated by the relative analytic cells $\omega_0^\gamma$. Here, we use the fact that $Z\rar S$ is locally trivial in the analytic topology and there is a analytic cellular decomposition similar to \eqref{cellulardecomposition}.

\begin{lemma}\label{pushforwardiso}
The cycles $\omega_0^\gamma$ in $RCH^*(Z_{U_\alpha})_\Q$ patch together in the \'etale site to determine a subspace $RCH^*(Z)_\Q$ of
$CH^*(Z)_\Q$, generated by the patched cycles and which maps isomorphically onto the subspace $RH^{2r}(Z)_\Q \subset H^{2r}(Z,\Q)$, under the cycle class map  
$$
CH^*(Z)_\Q \rar  H^{2*}(Z,\Q).
$$
\end{lemma}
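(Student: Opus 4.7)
The plan is to establish patching in cohomology first, using the connectedness of $G$, and then transfer to algebraic cycles via injectivity of the cycle class map on the cellular subspace. \'Etale descent, mediated by the stack identification \eqref{ringiso}, will produce the global subspace $RCH^*(Z)_\Q \subset CH^*(Z)_\Q$.

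For the cohomology patching, on each trivialization $Z_{U_\alpha} \cong F \times U_\alpha$ given by Lemma \ref{Totaro}, the analytic Schubert cells span the subspace $H^{2*}(F,\Q) \otimes 1$ of $H^*(F\times U_\alpha,\Q)$. On an overlap $Z_{U_\alpha \times_S U_\beta}$, the two trivializations differ by a transition of the form $(f,u) \mapsto (\psi(u)\cdot f, u)$ with $\psi$ valued in the connected reductive group $G$. Since $G$ is connected, its action on $H^*(G/P,\Q)$ is trivial; hence the individual Schubert classes are preserved by the transition and patch to globally defined classes spanning $RH^{2*}(Z)_\Q \subset H^*(Z,\Q)$.

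For the algebraic step, Lemma \ref{relativecell} together with the K\"unneth identification \eqref{relchow} expresses $RCH^r(Z_{U_\alpha})_\Q$ as $CH^r(F)_\Q \otimes 1$; since $F$ is cellular, Lemma \ref{le.-motF} makes the cycle class map $CH^*(F)_\Q \to H^{2*}(F,\Q)$ an isomorphism, and tensoring with $CH^*(U_\alpha)_\Q$ shows that the cycle class map is injective on the local cellular subspace. On an overlap, the two pullbacks of a local cycle $\omega_0^\gamma$ induce the same cohomology class by the previous paragraph, so by this injectivity they must coincide as rational algebraic cycles---the required cocycle condition for \'etale descent. Via \eqref{ringiso} applied to the atlas $\sqcup_\alpha Z_{U_\alpha} \to Z$ (whose associated regular stack is $Z$ itself), the cycles descend to classes in $CH^*(Z)_\Q$ generating the subspace $RCH^*(Z)_\Q$. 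The cycle class map then sends $RCH^*(Z)_\Q$ onto $RH^{2*}(Z)_\Q$ by construction, and injectivity follows by combining the local injectivity with the fact that the patched cycles are explicit generators whose pullbacks to each $Z_{U_\alpha}$ remain linearly independent.

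The main obstacle I anticipate is the descent step: one must justify carefully that the cohomological $G$-invariance on overlaps upgrades to an actual equality of rational algebraic cycles (not merely equality of their spans), and that the \'etale-local cocycle data assemble into a genuine element of $CH^*(Z)_\Q$. This is precisely where the stack formalism of \cite{Gillet}, encoded in \eqref{ringiso}, does the work of replacing ad hoc \v{C}ech-style constructions on Chow groups.
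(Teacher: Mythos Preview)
Your proposal is correct and follows the paper's strategy: establish patching in cohomology, use the cellular isomorphism $RCH^*(Z_{U_\alpha})_\Q \simeq RH^{2*}(Z_{U_\alpha})_\Q$ to transfer the patching conditions to Chow groups, and invoke Gillet's descent via the stack identification. The only noteworthy difference is that you justify the cohomological patching by observing that the transition maps lie in the connected group $G$, which acts trivially on $H^*(G/P,\Q)$, whereas the paper appeals directly to the global relative analytic cellular decomposition coming from analytic local triviality of $Z\to S$; your argument is more explicit about the cocycle mechanism, but both routes arrive at the same conclusion.
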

\begin{proof}
Note that the cycles $\omega_0^\gamma \in  RCH^*(Z_{U_\alpha})_\Q$ patch together as analytic cycles in the \'etale site and determine a subspace
$RH^{2r}(Z)_\Q\subset H^{2r}(Z,\Q)$, 
 
Since the fiber $F$ is a cellular variety, there is a natural isomorphism
\begin{equation}\label{reliso}
RCH^*(Z_{U_\alpha})_\Q\sta{\simeq}{\rar} RH^{2*}(Z_{U_\alpha})_\Q
\end{equation}
between the $0$-th piece of the rational Chow group and the relative Betti cohomology, for each $\alpha$. 

Via the isomorphism in \eqref{reliso}, the patching conditions required over the \'etale site, to define the piece $RCH^{2r}(Z)_\Q$ are the same as those for $RH^{2r}(Z)_\Q$. More precisely, the patching conditions are given in  \cite[\S 4]{Gillet}. The identification in \eqref{reliso} together with the fact that the patching conditions are fulfilled for the singular cohomology of the  \'etale site, says that the cycles $\omega_0^\gamma$ patch together to give
a class in $RH^{2r}(Z)_\Q$, and hence they also patch together to give a class in $RCH^{2r}(Z)_\Q$. These patched classes
generate the $\Q$-subspace  $RCH^{2r}(Z)_\Q\subset  CH^*(Z)_\Q$ and which maps isomorphically onto the subspace $RH^{2r}(Z)_\Q \subset H^{2r}(Z,\Q)$ under the cycle class map.
\end{proof}

\begin{corollary}\label{pushforwardiso2}
There is a canonical isomorphism
$$
RCH^r(Z)_\Q \simeq RH^{2r}(Z)_\Q.
$$
between the rational Chow groups and the rational cohomology generated by the relative cells. 
\end{corollary}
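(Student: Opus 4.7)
The plan is to observe that Corollary \ref{pushforwardiso2} is essentially a repackaging of Lemma \ref{pushforwardiso}, and that the only work needed is to check that the cycle class map, restricted to the subspace $RCH^r(Z)_\Q$, yields a two-sided isomorphism onto $RH^{2r}(Z)_\Q$. Since both sides are defined as the $\Q$-spans of the same patched generators $\{\omega_0^\gamma\}$ (viewed respectively as algebraic cycles modulo rational equivalence and as analytic cohomology classes), the candidate isomorphism is the map sending $[\omega_0^\gamma]_{\text{rat}}$ to its cycle class $[\omega_0^\gamma]_{\text{coh}}$.

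First I would recall from Lemma \ref{pushforwardiso} that the relative algebraic cells $\omega_0^\gamma$ satisfy the \'etale patching conditions of \cite[\S 4]{Gillet}, so they determine well-defined global classes in $CH^*(Z)_\Q$. By definition, these patched classes generate $RCH^r(Z)_\Q$. The analogous statement for cohomology holds via the analytic local triviality of $Z \rar S$, giving the subspace $RH^{2r}(Z)_\Q \subset H^{2r}(Z,\Q)$ generated by the same cells regarded as analytic cycles.

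Next I would invoke the final assertion of Lemma \ref{pushforwardiso}: the cycle class map $CH^*(Z)_\Q \rar H^{2*}(Z,\Q)$ restricts to an isomorphism $RCH^r(Z)_\Q \stackrel{\simeq}{\rar} RH^{2r}(Z)_\Q$. Surjectivity is immediate since the generators $[\omega_0^\gamma]_{\text{coh}}$ of the target lie in the image, and injectivity follows because on each \'etale chart $U_\alpha$ the fibrewise isomorphism \eqref{reliso} is already known (as $F$ is cellular, its Chow groups map isomorphically to its cohomology, so the same holds for the $0$-th piece of the K\"unneth decomposition of $Z_{U_\alpha} = F \times U_\alpha$); a linear relation among the patched classes in $RCH^r(Z)_\Q$ would restrict on each $U_\alpha$ to such a relation in $RCH^r(Z_{U_\alpha})_\Q$, which by \eqref{reliso} forces the analogous relation in $RH^{2r}(Z)_\Q$.

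There is no real obstacle here beyond bookkeeping: the substantive content (\'etale descent of the cells via comparison with analytic cohomology, and the fibrewise Chow--cohomology isomorphism for cellular $F$) has already been established in Lemma \ref{pushforwardiso} and in \eqref{reliso}. The corollary is thus a formal consequence obtained by restricting the global cycle class map to the $\Q$-subspace generated by the patched relative algebraic cells.
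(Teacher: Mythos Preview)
Your proposal is correct and matches the paper's approach: the paper gives no proof at all beyond the end-of-proof symbol, treating the corollary as an immediate restatement of the final assertion of Lemma \ref{pushforwardiso}. Your write-up simply unpacks that implication (surjectivity from the generators, injectivity from the chartwise isomorphism \eqref{reliso}), which is exactly the content the paper leaves implicit.
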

\eop

Let $n:=\m{dim}(Z/S)$.

\begin{corollary}\label{relCKstack}
The bundle $Z\rar S$ has a relative Chow--K\"unneth decomposition, in the sense of \cite{GHM}.
\end{corollary}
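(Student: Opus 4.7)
The plan is to bootstrap the local cellular triviality supplied by Lemma \ref{Totaro} and Lemma \ref{pushforwardiso} into a global relative Chow--K\"unneth decomposition of $Z\rar S$. First, over each \'etale open $U_\alpha\rar S$ we have $Z_{U_\alpha}\cong F\times U_\alpha$ with $F=G/P$ cellular, so the absolute Chow--K\"unneth projectors for $F$ supplied by Lemma \ref{le.-motF} have the shape $\pi_i^F=\sum_{\dim \omega=i/2}[\omega\times \omega^\vee]\in CH^{\dim F}(F\times F)_\Q$, where $\omega^\vee$ runs over the cells complementary to $\omega$. I would define the local relative projectors by taking the exterior product with the base,
\[
\pi_i^{rel,\alpha}\;:=\;\pi_i^F\times [U_\alpha]\;\in\; CH^{n}(Z_{U_\alpha}\times_{U_\alpha} Z_{U_\alpha})_\Q,
\]
and observe that $\pi_i^{rel,\alpha}$ is a sum of products of relative algebraic cells in the fibre square $Z_{U_\alpha}\times_{U_\alpha}Z_{U_\alpha}\rar U_\alpha$, whose typical fibre is the cellular variety $F\times F$.

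Next I would patch these local projectors. The fibered square $Z\times_S Z\rar S$ is again an \'etale-locally trivial bundle with cellular fibre, so the entire machinery of Lemma \ref{pushforwardiso} and Corollary \ref{pushforwardiso2} applies verbatim with $Z$ replaced by $Z\times_S Z$. Each $\pi_i^{rel,\alpha}$ lies in the relative algebraic-cell subspace $RCH^n(Z_{U_\alpha}\times_{U_\alpha}Z_{U_\alpha})_\Q$, and in the analytic topology the corresponding relative cohomological cycles patch trivially (they are built from analytic cellular decompositions of the fibres, which glue under the local system of cohomologies). The isomorphism \eqref{reliso} and its analogue for the fibered square then promote this analytic patching to Chow-theoretic patching, producing global cycles $\pi_i^{rel}\in CH^n(Z\times_S Z)_\Q$ whose restriction to each $Z_{U_\alpha}\times_{U_\alpha}Z_{U_\alpha}$ is $\pi_i^{rel,\alpha}$.

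Finally I would verify the three defining properties of a relative Chow--K\"unneth decomposition in the sense of \cite{GHM}. The identity $\sum_i\pi_i^{rel,\alpha}=\Delta_{Z_{U_\alpha}/U_\alpha}$ and the orthogonality relations $\pi_i^{rel,\alpha}\circ_{U_\alpha}\pi_j^{rel,\alpha}=\delta_{ij}\pi_i^{rel,\alpha}$ hold locally because they hold for the absolute Chow--K\"unneth projectors of the cellular variety $F$. Both the diagonal class and the relative composition of correspondences are \'etale-local constructions (the composition is an intersection on $Z_{U_\alpha}\times_{U_\alpha}Z_{U_\alpha}\times_{U_\alpha}Z_{U_\alpha}$, itself \'etale-locally trivial with cellular fibres $F^3$, and hence respects patching via the same $RCH$-injectivity). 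That the $\pi_i^{rel}$ lift the relative K\"unneth components in relative cohomology is automatic, since fibrewise they specialize to the Chow--K\"unneth projectors of $F$, whose classes in $H^\ast(F\times F,\Q)$ are precisely the K\"unneth components of $\Delta_F$.

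The main obstacle I expect is the last checking of orthogonality globally: one must be careful that the relative composition $\circ_S$, which involves pullbacks along the partial diagonal of $Z\times_S Z\times_S Z$, interacts well with the patching of relative-cell cycles across different \'etale charts. I would handle this by running the argument of Lemma \ref{pushforwardiso} on the triple fibre product and using the injectivity of the cycle class map on the relative-cell subspace to reduce these identities in $CH^\ast$ to the corresponding identities in cohomology, where they are manifest from the cellular Chow--K\"unneth decomposition of $F$.
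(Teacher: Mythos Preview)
Your proposal is correct and follows essentially the same strategy as the paper: apply the patching machinery of Lemma~\ref{pushforwardiso} and Corollary~\ref{pushforwardiso2} to the fibred square $Z\times_S Z\rar S$ (whose fibre $F\times F$ is again cellular), so that the relative projectors, living in the relative-cell subspace where the cycle class map is an isomorphism, patch and satisfy the required identities because they do so in cohomology. The only cosmetic difference is that the paper phrases the argument starting from the global cohomological K\"unneth components of $\Delta_{Z/S}$ in $RH^{2n}(Z\times_S Z)_\Q$ and lifting them via Corollary~\ref{pushforwardiso2}, whereas you start from the explicit local Chow projectors $\pi_i^F\times[U_\alpha]$ and patch; under the $RCH\simeq RH$ isomorphism these are the same construction, and your extra care with the triple fibre product for orthogonality is a welcome elaboration of what the paper leaves implicit.
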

\begin{proof}
This is an application of Lemma \ref{pushforwardiso}, applied to the relative product $Z\times_SZ\rar S$. We  
notice that the relative orthogonal K\"unneth projectors in $H^{2n}(Z\times_S Z,\Q)$ lift to relative orthogonal projectors in $H^{2n}(Z_{U_\alpha}\times_{U_\alpha}  Z_{U_\alpha},\Q)$ and which add to the relative diagonal cycle.
Now we note that the relative diagonal $\Delta_{Z/S}$ and its orthogonal K\"unneth components 
actually lie in the piece $RH^{2n}(Z\times_S Z)_\Q$ (generated by the relative algebraic cells) and under the isomorphisms in \eqref{behrend1}, \eqref{behrend2}, lift to an orthogonal decomposition
$$
\Delta_{Z/S}=\sum_{i=0}^{2n}\Pi_i \,\in\, RH^{2n}(Z\times_{S} Z)_\Q 
$$
over the \'etale site, i.e, over $Z_{U_\alpha}\times_{U_\alpha} Z_{U_\alpha}$, for each $\alpha\in I$.
Now apply Corollary \ref{pushforwardiso2} to the product space $Z_{U_\alpha}\times_{U_\alpha}Z_{U_\alpha} \rar U_\alpha$, to lift the above orthogonal projectors to orthogonal algebraic projectors in $RCH^n(Z_{U_\alpha}\times_{U_\alpha}Z_{U_\alpha})_\Q$, and these patch to give relative Chow--K\"unneth projectors and a relative Chow--K\"unneth
decomposition
$$
\Delta_{Z/S}=\sum_{i=0}^{2n}\Pi_i \,\in\, CH^n(Z\times_S Z)_\Q.
$$ 
\end{proof}

\begin{proposition}\label{kd}
Suppose $Z\rar S$ is a rational homogeneous bundle over a smooth variety $S$.
Then the motive of the bundle $Z\rar S$ is expressed
as a sum of tensor products of summands of the motive of $S$ with
the twisted Tate motive. More precisely,
the motive of $Z$ can be written as
$$
h(Z)= \bigoplus_i h^i(Z)
$$
where $h^i(Z)= \bigoplus_{j+k}r_{\omega_\al}.\Le^j\otimes h^k(S)$.
Here $r_{\omega_\al}$ is the number of $j$-codimensional cells on a fibre $\F$ 

In particular, if $S$ has a Chow--K\"unneth decomposition then $Z$ also 
admits an absolute Chow--K\"unneth decomposition.
\end{proposition}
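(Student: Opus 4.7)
The plan is to combine Corollary \ref{relCKstack} with the Gordon--Hanamura--Murre criterion of \cite{GHM2}. From Corollary \ref{relCKstack} we already have a relative Chow--K\"unneth decomposition
$$
\Delta_{Z/S} \;=\; \sum_{i=0}^{2n}\Pi_i \;\in\; CH^n(Z\times_S Z)_\Q,
$$
and by hypothesis $S$ admits an absolute Chow--K\"unneth decomposition $\Delta_S = \sum_k \pi_k^S$. The GHM2 criterion then outputs absolute Chow--K\"unneth projectors on $Z$ as suitable compositions of the relative $\Pi_a$ with pullbacks $(f\times f)^* \pi_k^S$; orthogonality and the identity $\sum \pi_i^Z = \Delta_Z$ are formal consequences of the corresponding properties of the relative projectors and of the absolute ones on $S$.

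To obtain the explicit form of the motive, I would unpack what each relative summand contributes. By Lemma \ref{relativecell} and Lemma \ref{pushforwardiso}, the relative projector $\Pi_a$ is patched from the graphs of the algebraic cells of the fibre $F = G/P$. Each cell $\omega_\gamma$ of codimension $j_\gamma$ contributes a Tate summand $\Le^{\otimes j_\gamma}$ to the relative motive, yielding
$$
h(Z/S) \;=\; \bigoplus_\gamma \Le^{\otimes j_\gamma}.
$$
Tensoring this with the absolute decomposition $h(S) = \bigoplus_k h^k(S)$ and grouping the resulting summands by the number $r_{\omega_\al}$ of cells of a given codimension $j$ on the fibre produces the stated formula $h^i(Z) = \bigoplus r_{\omega_\al}\cdot \Le^{\otimes j}\otimes h^k(S)$; the absolute Chow--K\"unneth projectors on $Z$ are then read off by collecting summands of common total cohomological weight.

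The main obstacle is the clean application of \cite{GHM2}: one has to check that the relative projectors built in Corollary \ref{relCKstack} satisfy the hypotheses of that criterion, most importantly that each $\Pi_a$ has pure cohomological weight (which holds here because it is built from cells of a single codimension) and that the composition rule actually produces orthogonal idempotents in $CH^{\m{dim} Z}(Z\times Z)_\Q$. Once these formal verifications are in place, the Tate-twisted decomposition of $h^i(Z)$ is essentially combinatorial bookkeeping driven by the motivic splitting of cellular fibres, and the final claim on absolute Chow--K\"unneth decomposition follows by reassembling the pieces.
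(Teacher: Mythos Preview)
Your proposal is correct and follows essentially the same route as the paper: invoke Corollary \ref{relCKstack} to obtain the relative Chow--K\"unneth decomposition, then apply the criterion of \cite[Main theorem 1.3]{GHM2} using that the fibres have only algebraic cohomology (your ``pure cohomological weight'' condition), and read off the explicit Tate-twisted decomposition as in \cite[Lemma 3.2, Corollary 3.3]{Iy}. The paper's own proof is considerably terser---it simply cites these three inputs without unpacking the combinatorics or the composition rule---but the logical content is the same.
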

\begin{proof}
By Corollary \ref{relCKstack}, we know that the bundle $Z/S$ has a relative Chow--K\"unneth decomposition. Since the map $Z\rar S$ is a smooth morphism and the fibres of $Z\rar S$ have only algebraic cohomology, we can directly apply the criterion in \cite[Main theorem 1.3]{GHM2}, to get absolute Chow--K\"unneth projectors for $Z$ and the decomposition stated above (for example, see \cite[Lemma 3.2, Corollary 3.3]{Iy}). 
\end{proof}

\begin{remark}
Suppose $X$ is a smooth projective variety with a nef tangent bundle. Then by \cite{Ca-Pe},\cite{DPS}, we know that there is an \'etale cover $X'\rar X$ of $X$ such that $X'\rar A$ is a smooth morphism over an abelian variety $A$, whose fibres are smooth Fano varieties with a nef tangent bundle.
It is an open question \cite[p.170]{Ca-Pe}, whether such a Fano variety is a rational homogeneous variety. A positive answer to this question, together with Proposition \ref{kd}, will give absolute Chow--K\"unneth projectors for all varieties with a nef tangent bundle. See also \cite[section 4]{Hwang} for a discussion on new cases where this question is answered positively.
\end{remark}


\section{Chow--K\"unneth decomposition for log homogeneous varieties}

Log homogeneous varieties were introduced by M. Brion \cite{Brion}. Suppose $X$ is a smooth projective variety and $D\subset X$ is a normal crossing divisor.
Then $X$ is said to be log homogeneous with respect to $D$ if the logarithmic tangent bundle $\cT_X(-D)$ is generated by its global sections. Then $X$ is almost homogeneous under the connected automorphism group $G:=Aut^0(X,D)$, with boundary $D$. The $G$-orbits in $X$ are exactly the strata defined by $D$, in particular their number is finite.

A classification of log homogeneous varieties is given by Brion which says:

\begin{theorem}\label{classifylog}
Any log homogeneous variety $X$ can be written uniquely as $G\times^I Y$, where 

1) $G$ is  connected algebraic group, 

2) $I \subset G$ is a closed subgroup containing $G_{aff}$ as a subgroup of finite index,

3) choose any Levi subgroup $L\subset G_{aff}$, $Y$ is a complete smooth $I$-variety containing an open $L$-stable
subset $Y_L$ such that the $L$-variety $Y$ is spherical and the projection
\begin{equation}\label{albanesefib}
X\rar G/I =:A
\end{equation}
is the Albanese morphism.
\end{theorem}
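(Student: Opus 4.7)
The plan is to follow the structure theory of almost homogeneous varieties, combining Chevalley's structure theorem for algebraic groups with the classification of spherical embeddings. Since $X$ is log homogeneous with respect to $D$, the connected automorphism group $G = Aut^0(X,D)$ acts on $X$ with finitely many orbits, namely the strata of $D$, as noted just before the statement. By Chevalley's theorem, $G$ fits into a canonical exact sequence
$$
1 \rar G_{aff} \rar G \rar A(G) \rar 1,
$$
where $G_{aff}$ is the maximal connected normal affine subgroup of $G$ and $A(G)$ is an abelian variety.

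First I would analyze the Albanese morphism $\m{alb}\colon X \rar \m{Alb}(X)$. Since $G$ acts on $X$, the map $\m{alb}$ is $G$-equivariant with respect to the induced action of $G$ on $\m{Alb}(X)$. Because $G_{aff}$ is affine, any morphism from $G_{aff}$ to an abelian variety is constant, so $G_{aff}$ acts trivially on $\m{Alb}(X)$, and the action factors through $A(G)$. Since $X$ is almost $G$-homogeneous, $\m{alb}(X)$ is a single $A(G)$-orbit, and a dimension/universality argument identifies $\m{Alb}(X)$ with $A(G)$ modulo a finite subgroup. Setting $I$ to be the stabilizer in $G$ of a chosen base fiber of $\m{alb}$, one obtains a $G$-equivariant identification $\m{Alb}(X) = G/I =: A$, together with the inclusion $G_{aff} \subset I$ and finite index $[I : G_{aff}]$.

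Next I would study the fiber $Y := \m{alb}^{-1}(eI)$. It is a smooth complete $I$-variety, on which $G_{aff}$ acts with finitely many orbits (those strata of $D$ meeting $Y$). Choose a Levi decomposition $G_{aff} = L \ltimes R_u(G_{aff})$; the union of maximal-dimensional $L$-orbits on $Y$ gives the open $L$-stable subset $Y_L$. To verify that $Y$ is $L$-spherical, I would use the hypothesis that $\cT_X(-D)$ is globally generated: restriction to the fiber shows that the logarithmic tangent bundle of $(Y, D \cap Y)$ is generated by the global sections coming from the Lie algebra of $L$, so that $(Y, D\cap Y)$ is itself log homogeneous under $L$. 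Since $Y$ is complete and $L$ is reductive, a log homogeneous $L$-variety in this sense is precisely a smooth complete spherical $L$-embedding together with its boundary, which yields the required sphericity. The reconstruction $X \cong G \times^I Y$ is then the standard description of a $G$-equivariant fibration over $G/I$ from the fiber over the base point, and uniqueness follows from the canonicity of $G = Aut^0(X,D)$, of $I$ as the stabilizer of a fiber of $\m{alb}$, and of the Albanese map itself.

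The main obstacle I expect is the identification of smooth complete log homogeneous $L$-varieties with smooth complete spherical $L$-embeddings, and in particular showing that enough of the global sections of $\cT_X(-D)$ restrict to generating sections of $\cT_Y(-D\cap Y)$ that actually come from $L$ rather than from the unipotent radical. This requires a careful analysis of how $H^0(X, \cT_X(-D))$ decomposes as a $G$-module via the Chevalley sequence, combined with an orbit/dimension count using the transitivity of $L$ on a dense open subset of $Y$ to pin down the Borel-open orbit that characterizes sphericity.
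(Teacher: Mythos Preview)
The paper does not prove this statement at all: its entire proof is the single line ``See \cite[Theorem 3.2.1]{Brion}.'' The theorem is quoted from Brion's work on log homogeneous varieties and is used as a black box in the subsequent arguments. So there is nothing in the paper to compare your proposal against.

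That said, your outline is a reasonable sketch of the strategy Brion actually follows: Chevalley's theorem to split off the abelian part, identification of the Albanese map with the projection $G \to G/I$, and analysis of the fiber as an $I$-variety that is spherical under a Levi subgroup. You have also correctly identified the genuinely nontrivial step, namely showing that the fiber $Y$ is spherical under $L$; in Brion's paper this is the content of his characterization of log homogeneous varieties under a \emph{linear} group as exactly the regular (equivalently, toroidal) spherical embeddings, and it does require real work beyond what you have written. Your worry about whether enough sections of $\cT_X(-D)$ restrict to sections coming from $L$ rather than the unipotent radical is on point, and in Brion's treatment this is handled via a structure theorem for the Lie algebra of vector fields $H^0(X,\cT_X(-D))$ together with a local analysis near closed orbits. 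If you intend to replace the citation by a self-contained proof, you would need to fill in that step in detail; otherwise, simply citing \cite[Theorem 3.2.1]{Brion} as the paper does is the appropriate move.
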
 
\begin{proof}
See \cite[Theorem 3.2.1]{Brion}.
\end{proof}

Recall that a smooth spherical variety $Y$ is a $G$-variety such that the Borel subgroup $B$ of $G$ has an open dense orbit in $Y$. It is known that $Y$ contains a finite number of $B$-orbits. Since we are looking at varieties defined over $\comx$, it follows that a spherical variety is a linear variety (in the sense of \cite[Addendum, p.5]{Totaro}).
In particular, we have

\begin{lemma}\label{fulton} 
Suppose $Y$ is a smooth complete spherical variety.
Then there is an isomorphism
$$
CH^i(Y) \sta{\simeq}{\rar} H^{2i}(Y,\Z)
$$
for each $i$.
\end{lemma}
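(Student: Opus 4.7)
The plan is to derive the assertion from Totaro's theorem: for a smooth projective linear variety over $\comx$, the rational cohomology is concentrated in even degrees and the cycle class map $CH^i(\,\cdot\,) \to H^{2i}(\,\cdot\,,\Z)$ is an integral isomorphism in every codimension. The only nontrivial step is the verification that a smooth complete spherical variety qualifies as ``linear'' in Totaro's sense, which is precisely the remark stated immediately before the lemma in the excerpt; once that is in hand, the lemma is a direct quotation.

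First I would recall Totaro's definition: a variety $X$ is called linear if it is either an affine space, or if there exists a closed subvariety $Z \subset X$ such that both $Z$ and the open complement $X\setminus Z$ are linear. This recursive definition makes the class closed under disjoint unions, products, and stratifications with linear strata.

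Next I would check that the spherical $G$-variety $Y$ is linear. The key structural input is the finiteness of $B$-orbits: fix a Borel subgroup $B \subset G$, and decompose $Y$ into its finitely many locally closed $B$-orbits. Each orbit has the form $B/B_y$ with $B_y \subset B$ an algebraic subgroup; since $B$ is connected solvable, the quotient $B/B_y$ admits a composition series whose successive quotients are $\G_m$ or $\G_a$, and is therefore linear. Induction on the dimension of $Y$, using the $B$-orbit stratification, then exhibits $Y$ itself as linear.

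Finally I would invoke Totaro's theorem on smooth projective linear varieties over $\comx$ to conclude that the cycle class map $CH^i(Y) \to H^{2i}(Y,\Z)$ is an isomorphism for each $i$. The main obstacle in the argument is precisely the linearity verification in the previous step; Totaro's theorem itself proceeds by comparing the localization exact sequences for Chow groups and Borel--Moore homology along the inductive decomposition witnessing linearity, with the base case of affine space being immediate, so no genuine difficulty remains once $Y$ is known to be linear.
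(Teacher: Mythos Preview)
Your argument is correct. The paper's proof, however, is a single-line citation to \cite[Corollary to Theorem 2]{Fulton} (Fulton--MacPherson--Sottile--Sturmfels), which shows directly that for any scheme on which a connected solvable group acts with finitely many orbits, the cycle class map from Chow groups to Borel--Moore homology is an integral isomorphism; the smooth complete spherical case is then immediate. You instead pass through Totaro's framework of linear varieties---a remark the paper makes just before the lemma but does not actually invoke in the proof---and appeal to Totaro's theorem. Mathematically the two routes coincide: both rest on the finite $B$-orbit stratification, the fact that each $B$-orbit is (as a variety) a product of copies of $\G_m$ and $\A^1$, and induction along the localization sequence. Your version has the virtue of spelling out the linearity verification, while the paper simply delegates the entire computation to the FMSS reference.
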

\begin{proof}
See \cite[Corollary to Theorem 2]{Fulton}.
\end{proof}

\begin{lemma}\label{CKspherical}
Suppose $Y$ is a smooth complete spherical variety. Then $Y$ has a Chow--K\"unneth decomposition.
\end{lemma}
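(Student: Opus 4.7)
The plan is to leverage Lemma \ref{fulton} to reduce the statement to an elementary Poincar\'e-duality construction. Since $Y$ is smooth, complete, and spherical, the cycle class map $CH^i(Y) \to H^{2i}(Y,\Z)$ is an isomorphism for every $i$; in particular $H^*(Y,\Q)$ vanishes in odd degrees and every rational cohomology class of $Y$ is the image of a $\Q$-linear combination of algebraic cycles. This puts us in the classical ``purely algebraic cohomology'' situation in which Chow--K\"unneth projectors can be written down by hand.

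Concretely, I would fix for each $k$ a $\Q$-basis $\{\alpha^{(k)}_j\}_j$ of $H^{2k}(Y,\Q)$ together with its Poincar\'e-dual basis $\{\beta^{(k)}_j\}_j$ of $H^{2(n-k)}(Y,\Q)$, where $n := \dim Y$. By Lemma \ref{fulton} each $\alpha^{(k)}_j$ and $\beta^{(k)}_j$ lifts uniquely to a rational algebraic cycle, which I shall denote by the same symbol. Set
\[
\pi_{2k} := \sum_j \beta^{(k)}_j \times \alpha^{(k)}_j \;\in\; CH^n(Y \times Y)_\Q,
\]
and set the odd-indexed projectors to zero (there are no nonzero odd Künneth components to account for). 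By construction the images of the $\pi_{2k}$ under the cycle class map are exactly the usual Künneth components of $[\Delta_Y]$ in $H^{2n}(Y \times Y,\Q)$, so cohomologically the identities $\sum_k \pi_{2k} = [\Delta_Y]$ and $\pi_{2k}\circ\pi_{2l} = \delta_{kl}\,\pi_{2k}$ hold automatically.

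The real work is then to upgrade these two identities from cohomology to $CH^n(Y \times Y)_\Q$, i.e., to establish that the cycle class map on $Y \times Y$ is injective. The mechanism I would invoke is that spherical varieties are \emph{linear} in the sense of Totaro--Jannsen, and the class of linear varieties is closed under products; consequently the external product induces an isomorphism $CH^*(Y)_\Q \otimes CH^*(Y)_\Q \sta{\simeq}{\rar} CH^*(Y \times Y)_\Q$ and the cycle class map on $Y \times Y$ is itself an isomorphism. Granting this, the cohomological identities lift verbatim to $CH^n(Y\times Y)_\Q$, giving the required Chow--K\"unneth decomposition.

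The main obstacle is precisely the injectivity/isomorphism statement for $Y \times Y$; one has to know that linearity is preserved under products (true, by stratifying by products of strata) and that for linear varieties the cycle class map is an isomorphism. If one prefers to avoid appealing to Totaro's general theorem, a cleaner alternative is to run a Bia\l{}ynicki-Birula argument: a generic one-parameter subgroup of a maximal torus of $G$ has only finitely many fixed points on $Y$, and the resulting plus-decomposition exhibits $Y$ as a cellular variety, at which point Lemma \ref{le.-motF} applies directly and gives the Chow--K\"unneth decomposition together with the stronger statement that $h(Y)$ is a sum of Tate twists.
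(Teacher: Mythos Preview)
Your proposal is correct and follows essentially the same route as the paper: invoke Lemma~\ref{fulton} to get that all cohomology of $Y$ is algebraic, then build the projectors by hand from Poincar\'e-dual bases. The paper's own proof is just a two-line appeal to Lemma~\ref{fulton} together with a citation to \cite[Lemma~5.2]{Iy-MS} for the actual construction of orthogonal projectors; you have effectively unpacked what that citation contains, and in particular you have correctly isolated the one nontrivial point---that the cycle class map must be injective on $Y\times Y$, not just on $Y$---and resolved it via the linearity of spherical varieties and closure of the linear class under products.
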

\begin{proof}
This follows from Lemma \ref{fulton} and the construction of orthogonal projectors given in \cite[Lemma 5.2]{Iy-MS}.
\end{proof}

We will show that $X$ has a Chow--K\"unneth decomposition under the following assumption;

\begin{theorem}\label{logCK}
Suppose $X$ is a log homogeneous variety.
Then the variety $X$ has a Chow--K\"unneth decomposition. Moreover, the motive of $X$ is expressed as a sum of tensor products of the summands of the motive of its Albanese reduction, with the twisted Tate motive.
\end{theorem}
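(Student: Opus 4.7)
The plan is to follow the proof strategy of Theorem \ref{homogvar} and Proposition \ref{kd}, with smooth complete spherical fibers playing the role of the rational homogeneous fibers.

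By Brion's classification (Theorem \ref{classifylog}) we write $X = G \times^I Y$ with Albanese morphism $\pi: X \to A := G/I$, where $Y$ is a smooth complete spherical $L$-variety and $A$ is an abelian variety (being a finite quotient of the abelian variety $G/G_{aff}$ by the finite group $I/G_{aff}$). First I would verify that $\pi$ is \'etale locally trivial. Since $\pi$ is the fiber bundle associated to the principal $I$-bundle $G \to G/I$, and such principal bundles under a smooth algebraic group in characteristic zero are \'etale locally trivial, $\pi$ is \'etale locally trivial with fiber $Y$.

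Next I would adapt Lemma \ref{relativecell}, Lemma \ref{pushforwardiso}, and Corollary \ref{relCKstack} to this setting. The essential input is Lemma \ref{fulton}: each fiber $Y$ has $CH^i(Y)_\Q \cong H^{2i}(Y,\Q)$, so its cohomology is entirely algebraic, even though $Y$ need not be strictly cellular. Over each trivializing \'etale open $U_\alpha \to A$, the product formula \cite[Theorem 2]{Fulton} gives
$$
CH^r(Y \times U_\alpha)_\Q = \bigoplus_{p+q=r} CH^p(Y)_\Q \cdot CH^q(U_\alpha)_\Q,
$$
and we define a \emph{relative} subspace $RCH^*(X_{U_\alpha})_\Q$ as the $CH^p(Y)_\Q$ summand. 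It maps isomorphically onto the analogous piece of the rational singular cohomology, and the patching argument of Lemma \ref{pushforwardiso}, carried out over the associated regular stack, assembles these pieces into a subspace $RCH^*(X)_\Q \subset CH^*(X)_\Q$ that maps isomorphically onto $RH^{2*}(X)_\Q \subset H^{2*}(X,\Q)$ under the cycle class map.

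With this machinery in place, the relative diagonal $\Delta_{X/A}$ and the K\"unneth components of its cohomology class both lie in the relative piece of $X \times_A X$, so they lift to orthogonal algebraic projectors over each chart that patch to a relative Chow--K\"unneth decomposition of $\pi$, exactly as in Corollary \ref{relCKstack}. Since $A$ admits an absolute Chow--K\"unneth decomposition (by Deninger-Murre \cite{De-Mu} and Shermenev \cite{Sh}) and the fibers of $\pi$ have only algebraic cohomology by Lemma \ref{fulton}, the Gordon-Hanamura-Murre criterion \cite[Main theorem 1.3]{GHM2} applies as in Proposition \ref{kd} to yield absolute Chow--K\"unneth projectors on $X$ together with the motivic decomposition in terms of $h(A)$ and twisted Tate motives asserted in the theorem.

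The main obstacle I anticipate is the patching step: unlike rational homogeneous varieties, spherical varieties need not carry an algebraic cellular decomposition, so the cycles $\omega_0^\gamma$ of Lemma \ref{relativecell} must be replaced by an abstract basis of $CH^*(Y)_\Q$ lifted via Lemma \ref{fulton} from a cohomological basis. One then has to verify that such a basis of algebraic cycles patches consistently across the \'etale charts, which however follows because its \emph{cohomological} avatar does (singular cohomology patches in the \'etale site) and the Chow-to-cohomology map on the relative piece is an isomorphism.
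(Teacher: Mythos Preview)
Your proposal is correct and follows essentially the same route as the paper: Brion's classification, \'etale local triviality of the Albanese fibration via the principal $I$-bundle $G\to G/I$, then the machinery of \S3 (relative projectors over trivialising charts, patching, and the Gordon--Hanamura--Murre criterion) with the spherical fiber $Y$ in place of $G/P$.

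One point worth noting: the paper actually asserts that $Y$ is cellular and invokes \cite[Theorem 2]{Fulton} to get the product formula for $CH^*(U_\alpha\times Y)$, whereas you are more cautious and work only from Lemma~\ref{fulton} (the isomorphism $CH^i(Y)_\Q\cong H^{2i}(Y,\Q)$), flagging the lack of a literal cell decomposition as a potential obstacle and explaining why the patching still goes through. Your caution is well placed: smooth complete spherical varieties are linear in Totaro's sense (finitely many $B$-orbits, each a product of copies of $\mathbb G_m$ and $\A^1$) but not cellular in the strict sense of \S3, so the argument really does rest on Lemma~\ref{fulton} and the FMSS product formula rather than on a cell filtration. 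In substance, though, this is exactly what the paper does.
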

\begin{proof}
With notations as in Theorem \ref{classifylog},
suppose the spherical variety $Y$ is a Fano variety. Then, by \cite[Proposition 4.2 i)]{Bien-Brion}, we have the vanishing $H^1(Y,T_{Y})=0$. In particular, this implies that the formal deformations of $Y$ are trivial. 
Hence, by \cite[Proposition 2.6.10]{Sernesi}, the Albanese fibration in \eqref{albanesefib} is \'etale locally trivial.
In general, consider the Albanese fibration
$$
X = G \times^I Y \rar G/I = A
$$
which is easily seen to be etale locally trivial. The following explanation is due to B. Totaro:
notice that all the fibers of this morphism are isomorphic
to Y. In more detail, this morphism
is etale locally trivial because the morphism $G \rar G/I$ is etale locally
trivial, which is a standard fact about the quotient of an algebraic
group by a smooth closed subgroup. See the discussion of homogeneous
spaces in \cite[6.14]{Borel}.

Hence we can apply the methods from the previous section.
By Lemma \ref{CKspherical}, relative Chow--K\"unneth projectors can be constructed for Zariski trivialisations of \eqref{albanesefib} over \'etale covers $U_\alpha\rar A$. Hence the proof of Proposition \ref{kd} applies to this situation. Indeed, Lemma \ref{relativecell} holds for a relative spherical variety over $U_\alpha$. This can be applied
to the Albanese fibration in \eqref{albanesefib} over \'etale morphisms where it is Zariski trivial. In this case, the following piece of the rational Chow ring $RCH^*(U_\alpha\times Y)_\Q$ is identified with the Chow ring $CH^*(Y)_\Q$. 
A formula similar to \eqref{koeck} holds for the Chow groups of $U_\alpha \times Y$, since $Y$ is cellular, see \cite[Theorem 2]{Fulton}.
Hence, by Lemma \ref{fulton},  $CH^*(U_\alpha\times Y)_\Q\simeq  H^{2*}(U_\alpha\times Y)_\Q$. Similarly Lemma \ref{pushforwardiso} and Corollary \ref{pushforwardiso2} hold for \eqref{albanesefib} over \'etale morphisms. The rest of the arguments are the same as given for a rational homogeneous bundle.
\end{proof}


\begin{thebibliography}{AAAAA}

\bibitem[Ak-Jo]{Ak-Jo} Akhtar, R., Joshua, R. {\em K\"unneth
decomposition for quotient varieties}, K\"unneth decompositions for quotient varieties.  Indag. Math. (N.S.)  17  (2006),  no. \textbf{3}, 319--344.

\bibitem[Ar-Dl]{Ar-Dl} Arapura, D., Dhillon, A. {\em The motive of the moduli stack of G-bundles over the universal curve},  Proc. Indian Acad. Sci. Math. Sci.  118  (2008),  no. \textbf{3}, 389--411.

\bibitem[Be]{Behrend} Behrend, K. {\em On the de Rham cohomology of differential and algebraic stacks.}  Adv. Math.  198  (2005),  no. \textbf{2}, 583--622. 

\bibitem[Bi-Br]{Bien-Brion} Bien, F., Brion, M. {\em Automorphisms and local rigidity of regular varieties.} Compositio Math. 104 (1996), no. \textbf{1}, 1--26. 

\bibitem[Bo]{Borel} Borel, A. {\em Linear Algebraic groups},  Graduate Texts in Mathematics,  \textbf{126}, Springer- Verlag, 1991.

\bibitem[Br]{Brion} Brion, F. {\em Log homogeneous varieties} arXiv:math/0609669, to appear in the Proceedings of the VI Coloquio Latinoamericano de Algebra (Colonia, Uruguay, 2005).

\bibitem[Ca-Pe]{Ca-Pe} Campana, F. and Peternell, T. {\em Projective manifolds whose tangent bundles are numerically effective}, Math. Ann. \textbf{289} (1991), 169-187.

\bibitem[dA-Ml]{dA-Mul} del Angel, P., M\"uller--Stach, S. {\em Motives of uniruled $3$-folds}, Compositio Math. 112 (1998), no. \textbf{1}, 1--16. 

\bibitem[dA-Ml2]{dA-Mu2} del Angel, P., M\"uller-Stach, S. {\em On Chow motives of 3-folds}, Trans. Amer. Math. Soc. 352 (2000), no. \textbf{4}, 1623--1633.

\bibitem[DPS]{DPS} Demailly, J.P, Peternell, T., Schneider, M. {\em Compact complex manifolds with numerically effective tangent bundles}, Journal of Algebraic Geometry \textbf{3} (1994), 295-345.

\bibitem[De-Mu]{De-Mu} Deninger, Ch., Murre, J. {\em Motivic decomposition of abelian schemes and the Fourier transform}, J. Reine Angew. Math. \textbf{422} (1991), 201--219. 

\bibitem[FMSS]{Fulton} Fulton, W., MacPherson, R., Sottile, F., Sturmfels, B. {\em Intersection theory on spherical varieties.} J. Algebraic Geom. 4 (1995), no. \textbf{1}, 181--193.

\bibitem[Gi]{Gillet} Gillet, H. {\em  Intersection theory on algebraic stacks and $Q$-varieties}, Proceedings of the 
Luminy conference on algebraic $K$-theory (Luminy, 1983).
J. Pure Appl. Algebra \textbf{34} (1984), 193--240.

\bibitem[Go-Mu]{Go-Mu} Gordon, B., Murre, J. {\em Chow motives of elliptic modular threefolds}, J. Reine Angew. Math. \textbf{514} (1999), 145--164. 

\bibitem[GHMu]{GHM} Gordon, B. B., Hanamura, M., Murre, J.P. {\em Relative Chow-K\"unneth projectors for modular varieties}  J. Reine Angew. Math.  \textbf{558}  (2003), 1--14.

\bibitem[GHMu2]{GHM2}  Gordon, B. B., Hanamura, M., Murre, J. P. {\em Absolute Chow-K\"unneth projectors for modular varieties}, J. Reine Angew. Math. \textbf{580} (2005), 139--155.

\bibitem[Gu-Pe]{Gu-Pe} Guletski\u\i, V., Pedrini, C. {\em Finite-dimensional motives and the conjectures of Beilinson and Murre} Special issue in honor of Hyman Bass on his seventieth birthday. Part III.  $K$-Theory  {30} (2003),  no. \textbf{3}, 243--263.

\bibitem[Hw]{Hwang} Hwang, J-K. {\em Rigidity of rational homogeneous spaces}, Proceedings of ICM 2006, Madrid, Volume \textbf{2}, 613-626.

\bibitem[Iy]{Iy} Iyer, J.N.,  {\em Murre's conjectures and explicit Chow K\"unneth projectors for varieties with a nef tangent bundle},  Trans. Amer. Math. Soc.  361  (2009),  no. \textbf{3}, 1667--1681. 

\bibitem[Iy-Ml]{Iy-MS} Iyer, J.N., M\"uller-Stach, S.
{\em Chow--K\"unneth decomposition
 for some moduli spaces}, arXiv math.AG/07104002, Documenta Mathematica, \textbf{14}, 2009,  1-18.

\bibitem[Ja]{Ja}Jannsen, U. {\em Motivic sheaves and filtrations on Chow groups}, Motives (Seattle, WA, 1991),  245--302, Proc. Sympos. Pure Math., \textbf{55}, Part 1, Amer. Math. Soc., Providence, RI, 1994.

\bibitem[Ko]{Ko} K\"ock, B. {\em Chow motif and higher Chow theory of $G/P$}, Manuscripta Math. \textbf{70} (1991), 363--372.

\bibitem[Mn]{Man} Manin, Yu. {\em Correspondences, motifs and monoidal transformations }(in Russian), Mat. Sb. (N.S.) \textbf{77} (119) (1968), 475--507.

\bibitem[MWYK]{MM} Miller, A., M\"uller-Stach, S., Wortmann, S., Yang, Y.H., Kang Z. {\em Chow-K\"unneth decomposition for universal families over Picard modular surfaces}, Algebraic cycles and motives. Vol. \textbf{2},  241--276, London Math. Soc. Lecture Note Ser., 344, Cambridge Univ. Press, Cambridge, 2007.

\bibitem[Mi]{Miller} Miller, A. {\em Chow motives of universal families over some Shimura surfaces}, arXiv math.AG/0710.4209.

\bibitem[Mm]{Mumford} 
Mumford, D. {\em Towards an Enumerative Geometry of the
Moduli Space of Curves}, Arithmetic and geometry, Vol. II, 271--328, Progr.
Math., $\bf{36}$, Birkh$\ddot{a}$user Boston, Boston, MA, 1983.

\bibitem[Mu]{Mu1} Murre, J. P. {\em On the motive of an algebraic surface}, J. Reine Angew. Math.  \textbf{409}  (1990), 190--204.

\bibitem[Mu2]{Mu2} Murre, J. P. {\em On a conjectural filtration on the Chow groups of an algebraic variety. I. The general conjectures and some examples}, Indag. Math. (N.S.)  4  (1993),  no. \textbf{2}, 177--188.

\bibitem[Mu3]{Mu3} Murre, J. P. {\em On a conjectural filtration on the Chow groups of an algebraic variety. II. Verification of the conjectures for threefolds which are the product on a surface and a curve},  Indag. Math. (N.S.)  4  (1993),  no. \textbf{2}, 189--201.

\bibitem[Ne-Za]{Ne-Za} Nenashev, A., Zainoulline, K. {\em Oriented cohomology and motivic decompositions of relative cellular spaces}, J. Pure Appl. Algebra  205  (2006),  no. \textbf{2}, 323--340. 

\bibitem[Sa]{Sa} Saito, M.  {\em Chow-Kunneth decomposition for varieties with low cohomological level}, arXiv math.AG/0604254.

\bibitem[Sc]{Sc}  Scholl, A. J. {\em Classical motives}, Motives (Seattle, WA, 1991),  163--187, Proc. Sympos. Pure Math., \textbf{55}, Part 1, Amer. Math. Soc., Providence, RI, 1994.

\bibitem[Se]{Sernesi}  Sernesi, E. {\em Deformations of algebraic schemes.} Grundlehren der Mathematischen Wissenschaften [Fundamental Principles of Mathematical Sciences], \textbf{334}. Springer-Verlag, Berlin, 2006. xii+339 pp.

\bibitem[Sh]{Sh} Shermenev, A.M. {\em The motive of an abelian variety}, Funct. Analysis, \textbf{8} (1974), 55--61.
\bibitem[To]{Totaro} Totaro, B. {\em Chow groups, Chow cohomology, and linear varieties} (14 pages), J. Alg. Geom., to appear.

\end {thebibliography}

\end{document}